\newtheorem{definition}{Definition}
\newtheorem{theorem}{Theorem}
\newtheorem{lemma}{Lemma}
\newtheorem{remark}{Remark}
\newtheorem{example}{Example}
\newtheorem{corollary}{Corollary}
\renewcommand{\maketag@@@}[1]{\hbox{\m@th\normalsize\normalfont#1}}%
\journal{Applied Mathematical Modelling}
\begin{document}

\begin{frontmatter}

\title{Routh-Hurwitz criterion of stability and robust stability for fractional-order systems with order $\alpha\in\left [1,2  \right )$}

\author{Jing Yang}
\address{University of Electronic Science and Technology of China, Chengdu 611731,
China}

\author{Xiaorong Hou$^*$}
\address{University of Electronic Science and Technology of China, Chengdu 611731,
China}
\cortext[mycorrespondingauthor]{Corresponding author}
\ead{Houxr@uestc.edu.cn}

\author{Yajun Li}
\address{State Grid Xinyang Electric Power Supply Company, Xinyang,
China}

%

%

\begin{abstract}
Based on the generalized Routh-Hurwitz criterion, we propose a sufficient and necessary criterion for testing the stability of fractional-order linear systems with order $\alpha\in\left [1,2  \right )$, called the fractional-order Routh-Hurwitz criterion. Compared with the existing criterion, our one involves fewer and simpler expressions, which is significant for analyzing robust stability of fractional-order uncertain systems. All these expressions are explicit ones about the coefficients of the characteristic polynomial of system matrix, so the stable parameter region of fractional-order uncertain systems can be described directly. Some examples show the effectiveness of our method.
\end{abstract}

\begin{keyword}
Fractional-order system\sep Generalized Routh-Hurwitz criterion\sep Robust stability
\end{keyword}

\end{frontmatter}

\linenumbers

\section{Introduction}
Routh-Hurwitz criterion is an effective method for analyzing the stability of integer-order linear time-invariant systems. In 2006, some sufficient or necessary stability conditions of fractional-order systems with order $\left (0,1  \right ]$ were given based on the integer-order Routh-Hurwitz criterion\cite{ahmed2006some}. Recently, for dimensions $n=2,3$, some sufficient and necessary stability conditions for the case of order $(0,2)$ were established; for dimension $n=4$, some sufficient or necessary stability conditions for the case of order $(0,2)$ were given\cite{bourafa2020some}. Some researchers proposed a sufficient and necessary stability condition of fractional-order systems with order $\alpha\in\left [1,2  \right )$\cite{tavazoei2009note} which transforms a n-dimension fractional-order system into a 2n-dimension integer-order system to analyze the stability. This method has beautiful form, but it has high computational complexity, especially for multi-parameter systems.
\par
For fractional-order uncertain systems, some sufficient and necessary methods have been proposed in recent years. For example, $\mu$-analysis method to test the robust stability of fractional-order uncertain systems with $\alpha\in\left [1,2  \right )$ can solve the robustness bound of perturbation parameters\cite{lu2013robust}. Moreover, based on cylindrical algebraic decomposition technique, some parameter space algorithms for analyzing the robust stability of fractional-order uncertain systems with $\alpha\in\left (0,2  \right )$ were proposed, with which the range of perturbation parameters can be obtained\cite{yang2019cad,yang2019robust}. When we consider the robust stability of multi-parameter systems, with these methods, all robust stability results can not be obtained directly but by very complicated calculations.
\par
In this paper, the problems of stability and robust stability on n-dimension fractional-order linear systems with $\alpha\in\left [1,2 \right )$ are considered. Based on the generalized Routh-Hurwitz criterion, a sufficient and necessary criterion for testing the stability of fractional-order linear systems with order $\alpha\in\left [1,2  \right )$, called the fractional-order Routh-Hurwitz criterion, is proposed. We also list complete, explicit expressions for $n=2,3,4$, respectively. Our criterion involves fewer and simpler expressions than the exiting method\cite{tavazoei2009note}. Meanwhile, all expressions in our method are explicit ones about the coefficients of the characteristic polynomial of system matrix. When we consider the robust stability of fractional-order systems with uncertain multiple parameters, our method can describe stability analysis results more easily than existing methods\cite{lu2013robust,yang2019cad,yang2019robust,2011Robust,2012Robust}.
\section{Preliminaries}
Let $f(z)$ be a complex coefficient polynomial and satisfy:
\begin{equation}
f(iz)=b_0z^n+b_1z^{n-1}+\cdots +b_n+i\left ( a_0z^n+a_1z^{n-1}+\cdots +a_n \right )\left ( a_0\neq 0 \right ),
\end{equation}
where $a_j(j=1,2,\cdots,n)$ and $b_j(j=1,2,\cdots,n)$ are real numbers.
\par
The $2n\times2n$ generalized Hurwitz matrix $H_f$ is constructed from $f(z)$ as follows:
\begin{equation}
H_f=\begin{bmatrix}
a_0 & a_1 & \cdots  & a_n & 0 & \cdots & 0\\
b_0 & b_1 & \cdots  & b_n & 0 & \cdots & 0\\
0 & a_0 & \cdots  & a_{n-1} & a_n & \cdots & 0\\
0 & b_0 & \cdots  & b_{n-1} & b_n & \cdots & 0\\
\vdots  & \vdots  & \vdots & \vdots  & \vdots  & \vdots &\vdots \\
0  & \cdots  & 0 & a_0  & \cdots  & a_{n-1} &a_n \\
0  & \cdots  & 0 & b_0  & \cdots  & b_{n-1} &b_n \\
\end{bmatrix}.
\end{equation}
\begin{lemma}\cite{gantmakher2000theory}(The Generalized Routh-Hurwitz Criterion)
\par All roots of $f(z)$ have negative real parts if and only if $\delta_{k}>0(k=1,2,\cdots,n)$, where $\delta_{k}(k=1,2,\cdots,n)$ is the $2k$-th order leading principle minor of $H_f$.
\end{lemma}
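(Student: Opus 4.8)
\medskip
\noindent\textit{Proof strategy.} This statement is classical, so in the paper it suffices to cite \cite{gantmakher2000theory}; a self-contained argument would run as follows. The plan is to convert the left-half-plane root-location problem for $f$ into a positive-definiteness statement for a Hermitian form, and then to extract that positive-definiteness from the $2k$-th order leading principal minors of $H_f$ via Sylvester's criterion.

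First I would pass to the upper half-plane. Since $\zeta\mapsto -i\zeta$ maps the open left half-plane onto the open upper half-plane and $p(-i\zeta)=f(\zeta)$, all roots of $f$ have negative real parts if and only if all $n$ roots of $p(z):=f(iz)=h(z)+i\,g(z)$ lie in the open upper half-plane, where $h(z)=b_0z^n+\cdots+b_n$ and $g(z)=a_0z^n+\cdots+a_n$. The hypothesis $a_0\neq 0$ guarantees $\deg g=\deg p=n$, so no root escapes to infinity and $H_f$ has the correct size $2n\times 2n$.

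Next I would invoke the complex Hermite--Biehler theorem together with Hermite's theorem on root separation. The former says that $p=h+ig$ has all $n$ of its zeros in the open upper half-plane exactly when $(g,h)$ is a \emph{positive pair}: $g$ and $h$ have only real, simple, mutually interlacing zeros and $g'(t)h(t)-g(t)h'(t)>0$ for all $t\in\mathbb{R}$. Equivalently, the Hermitian B\'ezoutian of $p$ and $\bar p=h-ig$ is positive definite. Now $H_f$ is precisely the Hurwitz/Sylvester-type matrix of the ordered pair $(g,h)$ -- its rows are the coefficient vectors of $g$ and of $h$, shifted successively to the right -- and a standard congruence identifies, for each $k$, the $2k$-th order leading principal minor $\delta_k$ of $H_f$ (with $\delta_0:=1$) with the $k$-th leading principal minor of that B\'ezoutian up to a fixed positive factor. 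By Sylvester's criterion the form is positive definite if and only if all these minors are positive, i.e. if and only if $\delta_k>0$ for $k=1,\dots,n$, which is the assertion.

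The step I expect to be the main obstacle is this last identification: carrying out the congruence -- equivalently, the Routh-array / continued-fraction expansion of $h/g$ read through the Hurwitz--Frobenius determinantal rule -- and checking, with the bookkeeping of shifts and sign conventions, that its pivots are the successive ratios $\delta_k/\delta_{k-1}$; and handling the degenerate cases (a common factor of $g$ and $h$, or a vanishing intermediate minor) where both the positive-pair description and the determinantal description break down together. Everything preceding that step is routine once the Hermite--Biehler theorem, Hermite's quadratic-form theorem, and the Hurwitz--Frobenius rule are in hand.
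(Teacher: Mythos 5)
The paper gives no proof of this lemma at all—it is stated as a quotation from Gantmacher's book—so there is no in-paper argument to compare against. Your sketch (rotate the left half-plane to the upper half-plane via $z\mapsto iz$, apply the complex Hermite--Biehler/positive-pair characterization, pass to the Hermitian B\'ezoutian, and read off positive definiteness from the even-order leading principal minors of $H_f$ by Sylvester's criterion) is a faithful outline of the classical argument in the cited source, and you correctly identify the minor-identification and degenerate cases as the only places requiring real work.
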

Consider the following n-dimension fractional-order linear time-invariant system:
\begin{equation}
\textrm{D}^{\alpha }x=Ax,
\end{equation}
where $\alpha \in [1,2)$ is fractional order, $A\in \mathbb{R}^{n\times n}$, $x=(x_1,x_2,\cdots,x_n)^T$ is state vector.
\begin{lemma}
\cite{matignon1996stability,matignon1998stability,petravs2011fractional}System(3) is asymptotically stable if and only if $\left |arg(\lambda_j )  \right |>\frac{\alpha\pi }{2}$, where $\lambda _{j}(j=1,2,\cdots ,n)$ are the eigenvalues of matrix $A$, $arg(\cdot )$ denotes the argument of a complex number.
\end{lemma}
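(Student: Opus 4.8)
\emph{Proof strategy.} The plan is to reduce the matrix equation~(3) to a finite family of Jordan-block subproblems, to solve each one explicitly via the Laplace transform, and then to read off asymptotic stability from the sectorial asymptotics of the Mittag-Leffler function. Concretely, first I would bring $A$ to Jordan canonical form: choose an invertible $P$ with $P^{-1}AP=J=\mathrm{diag}(J_1,\dots,J_r)$, each $J_\ell=\lambda_\ell I+N_\ell$ a Jordan block with $N_\ell$ nilpotent. Writing $y=P^{-1}x$, system~(3) becomes $\mathrm{D}^{\alpha}y=Jy$, which decouples blockwise; since $P$ is a fixed invertible matrix, $x(t)\to0$ iff $y(t)\to0$, i.e.\ iff every block is asymptotically stable. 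So it suffices to analyse a single Jordan block.

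Second, for a single block I would solve the Caputo fractional ODE by the Laplace transform. With $\alpha\in[1,2)$ the initial data are $y(0)$ and $y'(0)$, and using the classical pair $\mathcal{L}^{-1}[\,s^{\alpha-\beta}(s^{\alpha}I-J)^{-1}\,](t)=t^{\beta-1}E_{\alpha,\beta}(Jt^{\alpha})$ one obtains
\[
y(t)=E_{\alpha,1}(Jt^{\alpha})\,y(0)+t\,E_{\alpha,2}(Jt^{\alpha})\,y'(0),
\]
where, since $J=\lambda_\ell I+N_\ell$ with $N_\ell$ nilpotent, $E_{\alpha,\beta}(Jt^{\alpha})=\sum_{m\ge0}\tfrac{t^{\alpha m}}{m!}\,E_{\alpha,\beta}^{(m)}(\lambda_\ell t^{\alpha})\,N_\ell^{m}$ is a \emph{finite} sum. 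Hence every component of $y(t)$ is a finite linear combination of terms $t^{k}E_{\alpha,\beta}^{(m)}(\lambda_\ell t^{\alpha})$ with $\beta\in\{1,2\}$.

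Third --- the technical heart --- I would invoke the classical asymptotic expansion of the Mittag-Leffler function: for $0<\alpha<2$ and $\mu$ with $\tfrac{\alpha\pi}{2}<\mu<\min(\pi,\alpha\pi)$, one has $E_{\alpha,\beta}(z)=-\sum_{j=1}^{N}z^{-j}/\Gamma(\beta-\alpha j)+O(|z|^{-N-1})$ uniformly in the sector $\mu\le|\arg z|\le\pi$, whereas for $|\arg z|<\tfrac{\alpha\pi}{2}$ the expansion additionally carries the term $\tfrac1\alpha z^{(1-\beta)/\alpha}\exp(z^{1/\alpha})$; the same dichotomy, with an extra factor $|z|^{-m}$, holds for $E_{\alpha,\beta}^{(m)}$. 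Because $t^{\alpha}>0$ we have $\arg(\lambda_\ell t^{\alpha})=\arg\lambda_\ell$ and $|\lambda_\ell t^{\alpha}|\to\infty$. If $|\arg\lambda_\ell|>\tfrac{\alpha\pi}{2}$ for every $\ell$, then $t^{\alpha m}E_{\alpha,\beta}^{(m)}(\lambda_\ell t^{\alpha})=O(t^{-\alpha})$, so every component of $y(t)$ decays like a negative power of $t$ and system~(3) is asymptotically stable. Conversely, if some $|\arg\lambda_\ell|<\tfrac{\alpha\pi}{2}$ then $\Re(z^{1/\alpha})>0$ and, choosing $y(0)$ supported on that block, $y(t)$ grows exponentially; if $|\arg\lambda_\ell|=\tfrac{\alpha\pi}{2}$ then $z^{1/\alpha}$ is purely imaginary and the corresponding component is non-decaying (bounded-oscillatory for a simple eigenvalue, polynomially growing for a nontrivial block). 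In either case the system is not asymptotically stable, which yields the stated equivalence.

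The step I expect to be the main obstacle is the third: making the sectorial asymptotics of $E_{\alpha,\beta}$ and of its derivatives precise near the critical rays $\arg z=\pm\tfrac{\alpha\pi}{2}$. This rests on the Hankel-contour integral representation of the Mittag-Leffler function together with a careful check that the polynomial factors $t^{\alpha m}$ generated by nontrivial Jordan blocks can neither destroy the decay inside the stable sector nor be compensated on the critical rays. By contrast, the first two steps are routine linear algebra and Laplace-transform bookkeeping.
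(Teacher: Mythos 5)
The paper does not prove this lemma at all: it is imported verbatim as Matignon's theorem from the cited references, and the proof given there is exactly the one you sketch --- Jordan reduction, Laplace-transform solution in terms of $E_{\alpha,1}$ and $E_{\alpha,2}$, and the sectorial asymptotic expansion of $E_{\alpha,\beta}$ and its derivatives. Your outline is correct, and you rightly identify the only delicate step: justifying the expansion of $E_{\alpha,\beta}^{(m)}$ (via the Hankel-contour representation) so that the polynomial factors $t^{\alpha m}$ coming from nontrivial Jordan blocks are absorbed into the $O(t^{-\alpha})$ decay inside the stable sector. Two small points to tidy in a full write-up: the case $\lambda_\ell=0$, where $\arg$ is undefined and the criterion must be read as failing, deserves a separate line (the solution is constant or polynomially growing there); and in the converse direction you should say explicitly that choosing $y(0)$ along the eigenvector of an offending block annihilates the nilpotent corrections, so the exponentially growing (or, on the critical ray, non-decaying oscillatory) term cannot be cancelled.
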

Let
\begin{equation}
\begin{matrix}
\Omega :=\left \{ \gamma \in \mathbb{C}|\left |arg(\gamma)  \right |>\frac{\alpha \pi }{2} \right \}\\
\Sigma :=\left \{ \gamma\in \mathbb{C}|\left |arg(\gamma)  \right |<\frac{\alpha \pi }{2} \right \}\\
\Gamma :=\left \{ \gamma\in \mathbb{C}|\left |arg(\gamma)  \right |=\frac{\alpha \pi }{2} \right \}
\end{matrix},
\end{equation}
call them the stable region, the unstable region and the critical line of system(3), respectively(as shown in Figure 1).
\begin{figure}[h]
\begin{center}
\includegraphics[width=2.5in]{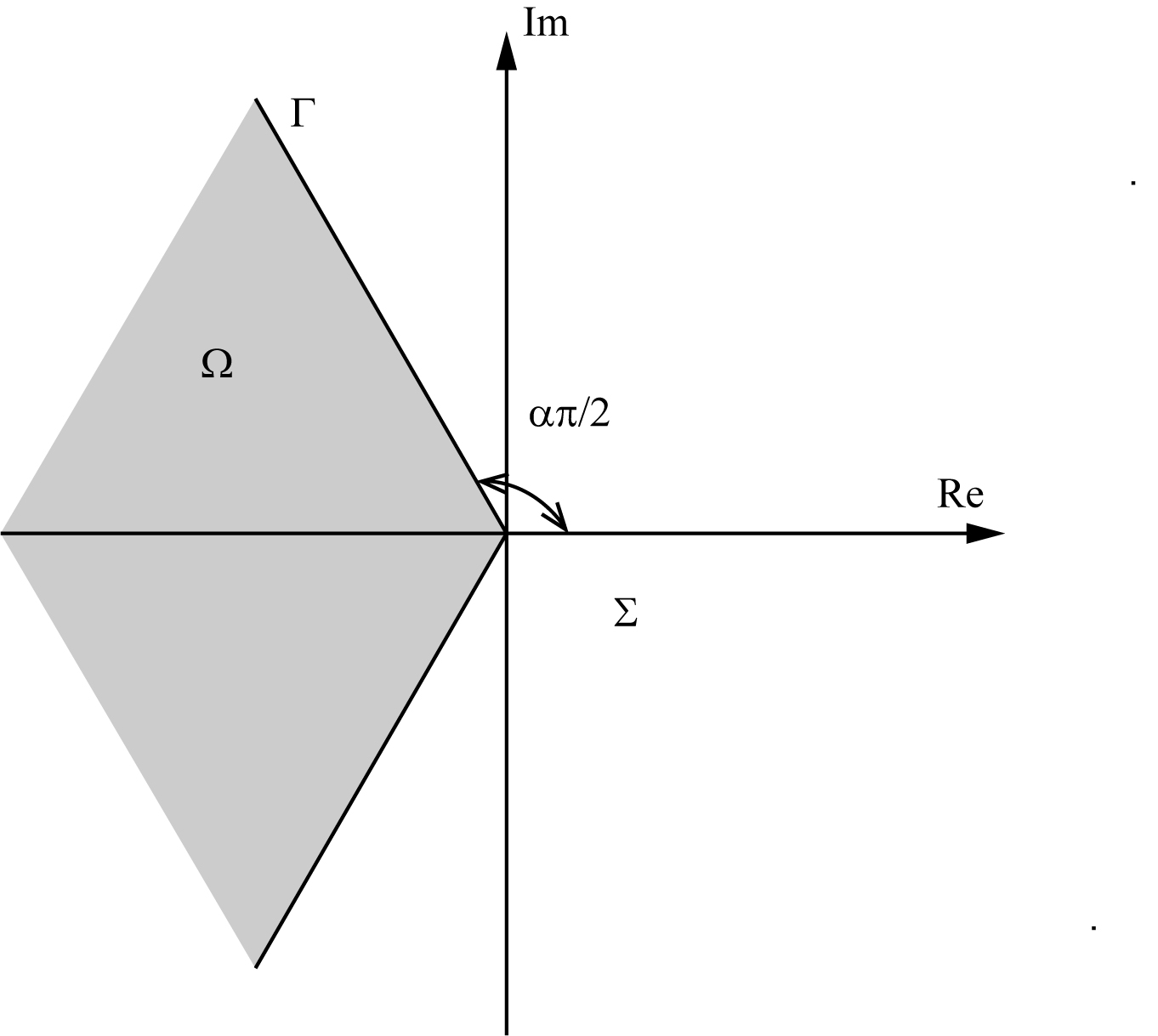}
\end{center}
\caption{$\Omega$, $\Sigma$ and $\Gamma$ of system(3)}
\label{fig1}
\end{figure}
\par
Suppose the characteristic polynomial of matrix $A$ is
\begin{equation}
f(\lambda)=\lambda^n+a_1\lambda^{n-1}+\cdots+a_n.
\end{equation}
So,
\begin{small}
\begin{equation}
f\left ( \lambda\cdot  e^{i\frac{\alpha\pi}{2} } \right )=\sum_{j=0}^{n}a_{j}\cdot cos\left ( \frac{(n-j)\cdot \alpha \pi }{2} \right )\cdot \lambda^{n-j}+i\cdot \left ( \sum_{j=0}^{n}a_{j}\cdot sin\left ( \frac{(n-j)\cdot \alpha \pi }{2}\right )\cdot \lambda^{n-j} \right ),
\end{equation}
\end{small}
where $a_0=1$.
\par
From Lemma 2, we know that system(3) is asymptotically stable if and only if all roots of $f(\lambda)$ are in stable region $\Omega$.
\par
In this paper, based on the generalized Routh-Hurwitz criterion, we propose a sufficient and necessary criterion, called the fractional-order Routh-Hurwitz criterion, to analyze the stability and robust stability of n-dimension fractional-order systems with order $1\leq \alpha <2$. Compared with the existing criterion, our one involves fewer and simpler expressions. All expressions in our results are explicit ones about the coefficients of the characteristic polynomial of system matrix, so the stable parameter region of fractional-order uncertain systems can be described directly.
\section{Main Results}
In this section, all notations are the same as above.
\par
\begin{definition}(The Fractional-Order Routh-Hurwitz Matrix)
\par For system(3), the $2n\times2n$ fractional-order Routh-Hurwitz matrix $H_{\alpha}$ of $f(\lambda)$ is defined as follows:
\begin{small}
\begin{equation}
H_{\alpha}=\left [ \begin{matrix}
a_0sin\left ( \frac{n\cdot \alpha \pi }{2} \right )  &a_{1}sin\left ( \frac{\left (n-1  \right )\cdot \alpha \pi }{2} \right )  & \cdots  & 0 & \cdots  &\cdots & 0\\
a_0cos\left ( \frac{n\cdot \alpha \pi }{2} \right )  &a_{1}cos\left ( \frac{\left (n-1  \right )\cdot \alpha \pi }{2} \right )  & \cdots  & a_n & \cdots  &\cdots & 0\\
0 & a_0sin\left ( \frac{n\cdot \alpha \pi }{2} \right ) &\cdots & a_{n-1}sin\left ( \frac{\alpha \pi }{2} \right )&\cdots &\cdots &0\\
0 & a_0cos\left ( \frac{n\cdot \alpha \pi }{2} \right ) &\cdots & a_{n-1}cos\left ( \frac{\alpha \pi }{2} \right )&a_n &\cdots &0\\
\vdots &\vdots &\vdots & \vdots &\vdots&\vdots &\vdots\\
0 &\cdots &\cdots & a_0sin\left ( \frac{n\cdot \alpha \pi }{2} \right ) &\cdots&a_{n-1}sin\left ( \frac{\alpha \pi }{2} \right ) &0\\
0 &\cdots &\cdots & a_0cos\left ( \frac{n\cdot \alpha \pi }{2} \right ) &\cdots&a_{n-1}cos\left ( \frac{\alpha \pi }{2} \right ) &a_n\\
\end{matrix} \right ].
\end{equation}
\end{small}
\end{definition}
\begin{theorem}(The Fractional-Order Routh-Hurwitz Criterion)
\par System(3) is asymptotically stable if and only if $\nabla_{p}> 0(p=1,2,\cdots,n)$, where $\nabla_{p}(p=1,2,\cdots,n)$ is the $2p$-th order leading principle minor of $H_{\alpha}$.
\end{theorem}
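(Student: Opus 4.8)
The plan is to reduce Theorem~1 to the classical generalized Routh--Hurwitz criterion (Lemma~1) by means of a rotation in the complex plane, combined with Lemma~2.

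First I would introduce the auxiliary polynomial $h(z):=f\!\left(z\,e^{i(\alpha-1)\pi/2}\right)$, which has degree $n$, real--parametrised but generally complex coefficients, and nonzero leading coefficient. Replacing $z$ by $iz$ gives $h(iz)=f\!\left(z\,e^{i\alpha\pi/2}\right)$, whose real and imaginary parts are exactly the two sums appearing in equation (7): the real part has coefficients $a_j\cos\!\big((n-j)\alpha\pi/2\big)$ and the imaginary part has coefficients $a_j\sin\!\big((n-j)\alpha\pi/2\big)$. Comparing this with the definition of the generalized Hurwitz matrix $H_f$ shows that the fractional-order Routh--Hurwitz matrix $H_\alpha$ of Definition~1 is precisely the generalized Hurwitz matrix associated with $h$: the rows of sines play the role of the ``$a_j$''-rows and the rows of cosines the role of the ``$b_j$''-rows. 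Hence $\nabla_p$ coincides with the $2p$-th order leading principal minor of that matrix, and Lemma~1 gives immediately that $\nabla_p>0$ for all $p=1,\dots,n$ if and only if every root of $h$ has negative real part.

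It then remains to show that $h$ is Hurwitz if and only if system (3) is asymptotically stable. By Lemma~2 together with equation (5), system (3) is asymptotically stable if and only if every root of $f$ lies in the stable region $\Omega$, and the roots of $h$ are obtained from those of $f$ by the rotation $\lambda\mapsto\lambda\,e^{-i(\alpha-1)\pi/2}$. The point that requires care is that this rotation does \emph{not} carry $\Omega$ onto the open left half-plane: for $\alpha>1$ the sector $\Omega$ has opening angle $(2-\alpha)\pi<\pi$, so no rotation can turn it into a half-plane. The remedy is to use that $f$, being the characteristic polynomial of the real matrix $A$, has real coefficients and therefore a conjugation-symmetric multiset of roots. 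Writing a nonreal root of $f$ as $\lambda=re^{i\theta}$ with $\theta\in(0,\pi)$, I would verify, by elementary interval arithmetic on arguments and using $\alpha\in[1,2)$ throughout, that the conjugate pair $\{\lambda,\bar\lambda\}$ lies in $\Omega$ if and only if $\theta\in(\alpha\pi/2,\pi)$, and that the two rotated points $\lambda\,e^{-i(\alpha-1)\pi/2}$ and $\bar\lambda\,e^{-i(\alpha-1)\pi/2}$ both lie in the open left half-plane under exactly the same condition on $\theta$; real roots are handled directly, a negative real root always rotating into the open left half-plane and a positive one never doing so, in agreement with their membership in $\Omega$. Since the roots of $h$ are exactly the rotated roots of $f$, summing over all roots yields the equivalence between $h$ being Hurwitz and all roots of $f$ lying in $\Omega$, and combining this with the equivalence of the previous paragraph completes the argument.

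I expect this last step to be the main obstacle: a single rotation is not enough, and the equivalence genuinely exploits the conjugate-pair structure forced by the realness of $A$; in addition the interval computations must be carried out attentively, since $\alpha\pi/2$ ranges over $[\pi/2,\pi)$ and several of the relevant arguments wrap around $\pm\pi$. A secondary issue to dispose of is the degenerate case $\sin(n\alpha\pi/2)=0$ (for example $\alpha=1$ with $n$ even), in which the leading coefficient of the imaginary part of $h(iz)$ vanishes and Lemma~1 does not apply verbatim; this occurs for only finitely many values of $\alpha$ in $[1,2)$ for each fixed $n$, so it can be settled either by a direct check or by a limiting/perturbation argument, and in particular it causes no difficulty for the explicit cases $n=2,3,4$ treated in the paper.
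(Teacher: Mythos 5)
Your proposal follows essentially the same route as the paper's proof: rotate by $(\alpha-1)\pi/2$ so that $g(i\lambda)=f\bigl(\lambda e^{i\alpha\pi/2}\bigr)$, identify $H_\alpha$ with the generalized Hurwitz matrix of the rotated polynomial, invoke the conjugate symmetry of the roots of the real polynomial $f$ to equate ``all roots in $\Omega$'' with ``all roots of $g$ in the open left half-plane,'' and conclude via Lemmas~1 and~2. Your write-up is in fact more careful than the paper's: you correctly observe that the rotation does not map $\Omega$ onto a half-plane (so the conjugate-pair argument is genuinely needed, where the paper disposes of it in one sentence), and you flag the degenerate values of $\alpha$ with $\sin(n\alpha\pi/2)=0$, where the hypothesis $a_0\neq 0$ of Lemma~1 fails and a limiting argument is required --- a point the paper does not address at all.
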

\begin{proof}
The coordinate system $xy$ counterclockwise turns through angle $\theta =\frac{\left (\alpha -1  \right )\pi }{2}$ as the coordinate system ${x}'{y}'$(as shown in Figure 2).
\begin{figure}[h]
\begin{center}
\includegraphics[width=2.5in]{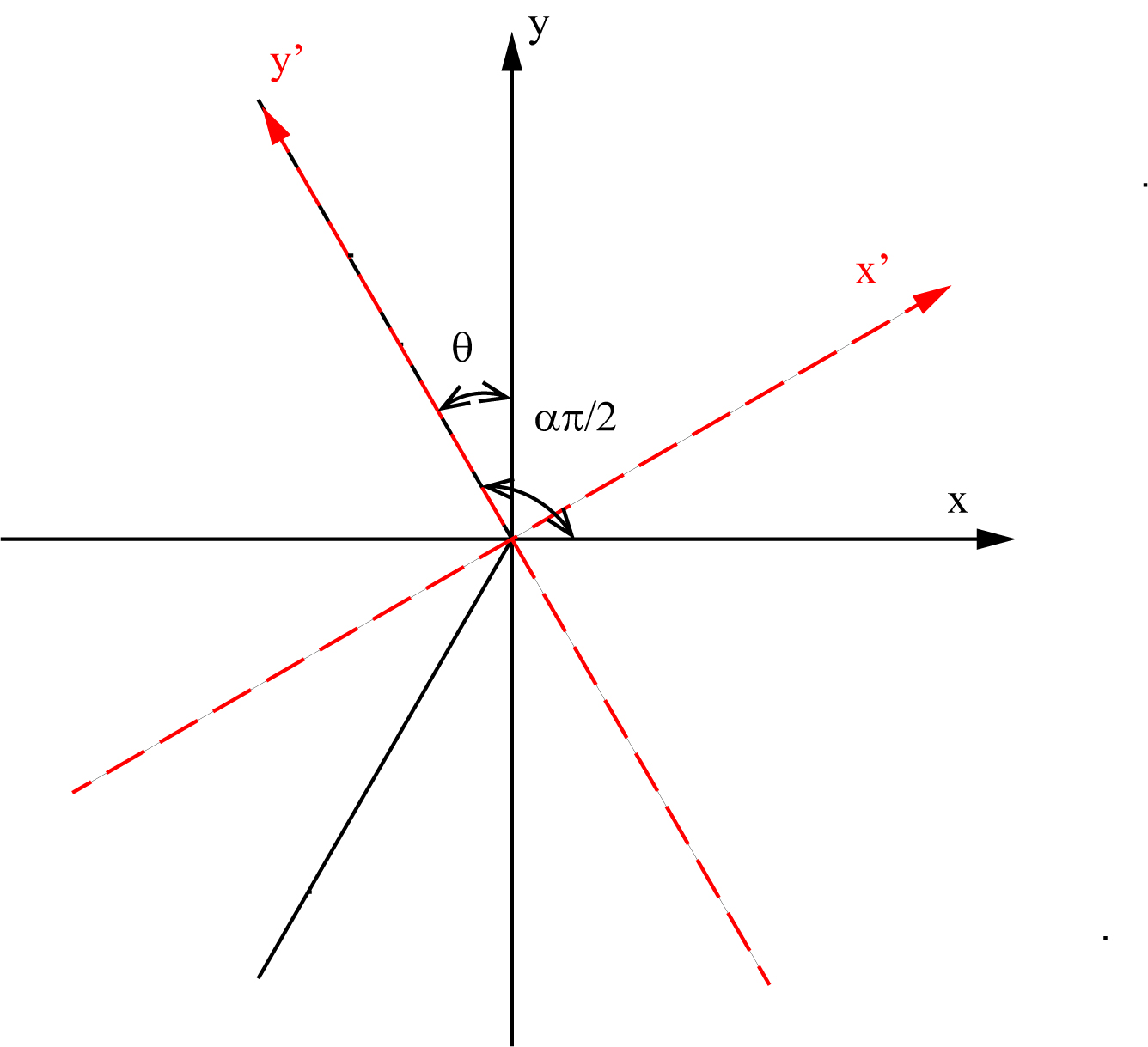}
\end{center}
\caption{Rotate the coordinate system}
\label{fig2}
\end{figure}
\par For system(3), in the new coordinate system ${x}'{y}'$, $f(\lambda)$ can be expressed as
\begin{equation}
g(\lambda)=f\left ( \lambda\cdot  e^{i\frac{\alpha -1}{2}\pi } \right ),
\end{equation}
thus
\begin{equation}
g(i\lambda)=f\left ( \lambda\cdot  e^{i\frac{\alpha\pi}{2} } \right ).
\end{equation}
Since $f(\lambda)$ is a real polynomial whose roots are symmetrical about the real axis in the coordinate system $xy$, its roots are in the stable region $\Omega$ if and only if they are in the left half plane of the coordinate system ${x}'{y}'$.
Based on the above analysis, according to Lemma 1 and Lemma 2, we have Theorem 1.
\end{proof}
\begin{remark}
Consider the general 2-dimension fractional-order system as follows:
\begin{equation}
\textrm{D}^{\alpha }x=Ax.
\end{equation}
where $A=\begin{bmatrix}
a_{11} & a_{12}\\
a_{21} & a_{22}
\end{bmatrix}$, $x=(x_1,x_2)^T$.
\par
System(10) is asymptotically stable if and only if the following integer-order system is asymptotically stable\cite{tavazoei2009note},
\begin{equation}
\dot{x}=\begin{bmatrix}
Asin\left ( \frac{\alpha \pi }{2} \right ) & Acos\left ( \frac{\alpha \pi }{2} \right )\\
-Acos\left ( \frac{\alpha \pi }{2} \right ) & Asin\left ( \frac{\alpha \pi }{2} \right )
\end{bmatrix}x.
\end{equation}
The characteristic polynomial of system(11) is
\begin{equation}
P(\lambda)=\lambda^4+a_1\lambda^3+a_2\lambda^2+a_3\lambda+a_4,
\end{equation}
where
\begin{equation}
\begin{aligned}
a_1 &=-2(a_{11}+a_{22})sin\left ( \frac{\alpha \pi }{2} \right ),\\
a_2 &= (a_{11}^{2}+4a_{11}a_{22}-2a_{12}a_{21}+a_{22}^2)sin^2\left ( \frac{\alpha \pi }{2} \right )+(a_{11}^{2}+2a_{12}a_{21}+a_{22}^{2})cos^2\left ( \frac{\alpha \pi }{2} \right ),\\
a_3 &= -2(a_{11}+a_{22})(a_{11}a_{22}-a_{12}a_{21})sin\left ( \frac{\alpha \pi }{2} \right ),\\
a_4 &= (a_{11}a_{12}-a_{12}a_{21})^2.
\end{aligned}
\end{equation}
The integer-order Routh-Hurwitz matrix $H_r$ of $P(\lambda)$ as follows:
\begin{equation}
H_r=\begin{bmatrix}
a_1 & a_3  & 0 & 0 \\
1 & a_2 & a_4  & 0 \\
0 & a_1 & a_3  & 0 \\
0 & 1 & a_2  & a_4 \\
\end{bmatrix}.
\end{equation}
Based on the integer-order Routh-Hurwitz criterion, we need to check $\Delta_{i}>0(i=1,2,3,4)$ to determine the stability of system(11), where $\Delta_{i}(i=1,2,3,4)$ is the $i$-th order leading principle minor of $H_r$ and
\begin{equation}
\begin{aligned}
\Delta_1 &=a_1,\\
\Delta_2 &=a_1a_2-a_3,\\
\Delta_3 &=-a_{1}^{2}a_4+a_1a_2a_3-a_{3}^{2}, \\
\Delta_4 &=-a_{4}(a_{1}^{2}a_4-a_1a_2a_3+a_{3}^{2}).
\end{aligned}
\end{equation}
The above integer-order Routh-Hurwitz criterion needs to calculate 4 leading principle minors that are complex.
\par
Consider the same fractional-order system(10), the characteristic polynomial of matrix $A$ is
\begin{equation}
f(\lambda)=\lambda^2-(a_{11}+a_{22})x+a_{11}a_{22}-a_{12}a_{21}.
\end{equation}
The fractional-order Routh-Hurwitz matrix $H_{\alpha}$ of polynomial(16) is as follows:
\begin{small}
\begin{equation}
H_{\alpha}=\begin{bmatrix}
sin\left ( \alpha \pi  \right ) & -\left ( a_{11}+a_{22} \right )sin\left ( \frac{\alpha \pi }{2} \right ) & 0 & 0\\
cos\left ( \alpha \pi  \right ) & -\left ( a_{11}+a_{22} \right )cos\left ( \frac{\alpha \pi }{2} \right ) & a_{11}a_{22}-a_{12}a_{21} & 0\\
0 &sin\left ( \alpha \pi  \right ) & -\left ( a_{11}+a_{22} \right )sin\left ( \frac{\alpha \pi }{2} \right ) & 0\\
0 & cos\left ( \alpha \pi  \right ) & -\left ( a_{11}+a_{22} \right )cos\left ( \frac{\alpha \pi }{2} \right ) & a_{11}a_{22}-a_{12}a_{21}
\end{bmatrix}.
\end{equation}
\end{small}
Based on Theorem 1, we only need to check two even-order leading principle minors $\nabla_{p}>0(p=1,2)$ of $H_{\alpha}$ to determine the stability of system(10), where
\begin{small}
\begin{equation}
\begin{aligned}
\nabla_1 &=-(a_{11}+a_{22})sin\left ( \frac{\alpha \pi }{2} \right ),\\
\nabla_2 &=(a_{22}a_{11}-a_{21}a_{12})sin^2\left ( \frac{\alpha \pi }{2} \right )\left ( (a_{11}+a_{22})^2-4(a_{22}a_{11}-a_{21}a_{12})cos^{2}\left ( \frac{\alpha \pi }{2} \right ) \right ).\\
\end{aligned}
\end{equation}
\end{small}
In this paper, $1\leq \alpha <2$, so $\nabla_{1}>0,\nabla_{2}>0$ is equivalent to
\begin{equation}
\begin{aligned}
\widetilde{\nabla}_1 &=-(a_{11}+a_{22})>0,\\
\widetilde{\nabla}_2 &=(a_{22}a_{11}-a_{21}a_{12})\left ( (a_{11}+a_{22})^2-4(a_{22}a_{11}-a_{21}a_{12})cos^{2}\left ( \frac{\alpha \pi }{2} \right ) \right )>0.\\
\end{aligned}
\end{equation}
Compared with the existing method, our one involves smaller numbers of leading principle minors. Each $\nabla_{p}$ is simpler than $\Delta_{i}$, $i=2p$. Our method has less computational complexity than the existing method\cite{tavazoei2009note}. Especially for fractional-order uncertain systems, the advantage of low computational complexity is significant.
\end{remark}
For system(3), suppose the characteristic polynomial of $A$ is $f(\lambda)=\lambda^n+a_1\lambda^{n-1}+\cdots+a_n$. Since systems with dimensions $n=2,3,4$ are often used, based on Theorem 1, we have the following corollaries. In the following, always set $cos^2\left ( \frac{\alpha \pi }{2} \right )=s$.
\begin{corollary}
In the case of $n=2$, system(3) is asymptotically stable if and only if
\begin{equation}
 a_{1}>0, \quad a_2\left ( a_1^2-4a_2s \right )>0
\end{equation}
\end{corollary}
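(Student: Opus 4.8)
The plan is to apply Theorem 1 directly in the case $n=2$ and to simplify the two leading principal minors $\nabla_1$ and $\nabla_2$ of the $4\times4$ fractional-order Routh-Hurwitz matrix $H_\alpha$ associated with $f(\lambda)=\lambda^2+a_1\lambda+a_2$ (so here $a_0=1$ and there are no further coefficients). First I would write out $H_\alpha$ explicitly as the $n=2$ specialization of the matrix in Definition 1, read off the top-left $2\times2$ block, and compute $\nabla_1 = a_1\bigl(\sin(\alpha\pi)\cos\left(\frac{\alpha\pi}{2}\right) - \cos(\alpha\pi)\sin\left(\frac{\alpha\pi}{2}\right)\bigr)$. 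By the angle-subtraction identity this collapses to $\nabla_1 = a_1\sin\left(\frac{\alpha\pi}{2}\right)$.

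Next I would compute $\nabla_2 = \det H_\alpha$, expanding along the last column (which has only two nonzero entries, both equal to $a_2$) to reduce the computation to two $3\times3$ determinants, and then simplify the resulting trigonometric polynomial using the double-angle formulas for $\sin(\alpha\pi)$ and $\cos(\alpha\pi)$. The expected outcome, consistent with the computation already carried out in Remark 1 for the general $2$-dimensional system, is $\nabla_2 = a_2\sin^2\left(\frac{\alpha\pi}{2}\right)\bigl(a_1^2 - 4a_2 s\bigr)$ with $s = \cos^2\left(\frac{\alpha\pi}{2}\right)$.

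Finally, since $\alpha\in[1,2)$ forces $\frac{\alpha\pi}{2}\in\left[\frac{\pi}{2},\pi\right)$ and hence $\sin\left(\frac{\alpha\pi}{2}\right)>0$, the strictly positive factors $\sin\left(\frac{\alpha\pi}{2}\right)$ and $\sin^2\left(\frac{\alpha\pi}{2}\right)$ may be removed without affecting signs, so $\nabla_1>0 \iff a_1>0$ and $\nabla_2>0 \iff a_2(a_1^2-4a_2 s)>0$; Theorem 1 then gives the stated equivalence. I expect the only real obstacle to be the bookkeeping in the $4\times4$ determinant for $\nabla_2$ together with the trigonometric simplification; in particular one should verify that the ordering of rows in $H_\alpha$ (sine row above cosine row, which differs from the arrangement of $H_f$ in the Preliminaries) does not introduce an unaccounted sign, but any such sign is uniform across the $\nabla_p$ and is already absorbed into the conventions behind Theorem 1.
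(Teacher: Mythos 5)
Your proposal is correct and follows essentially the same route as the paper: the paper's Remark 1 carries out exactly this computation for the general $2$-dimensional system, obtaining $\nabla_1=a_1\sin\left(\frac{\alpha\pi}{2}\right)$ and $\nabla_2=a_2\sin^2\left(\frac{\alpha\pi}{2}\right)\left(a_1^2-4a_2s\right)$ and then discarding the positive $\sin$ factors since $\frac{\alpha\pi}{2}\in\left[\frac{\pi}{2},\pi\right)$. One trivial bookkeeping slip: the last column of the $4\times4$ matrix $H_\alpha$ has only one nonzero entry ($a_2$ in position $(4,4)$; the other $a_2$ sits in column $3$), and your worry about a sign from the sine/cosine row ordering is moot because $H_f$ already places the imaginary-part coefficients (which become the sine terms) in the first row.
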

\begin{corollary}
In the case of $n=3$, system(3) is asymptotically stable if and only if
\begin{equation}
\begin{aligned}
 &a_1>0, \quad \left ( 4a_1a_3-4a_2^2 \right )s+a_1^2a_2-a_1a_3 >0,\\
 &a_3\cdot(64a_3^2s^3-\left ( 16a_1a_2a_3+48a_3^2 \right )s^2+( 4a_1^3a_3-4a_1a_2a_3+4a_2^3+12a_3^2 )s\\&-a_1^2a_2^2+2a_1a_2a_3-a_3^2)>0.
\end{aligned}
\end{equation}
\end{corollary}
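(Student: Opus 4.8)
The plan is to specialize Theorem 1 to $n=3$ and read off the three inequalities. With $f(\lambda)=\lambda^3+a_1\lambda^2+a_2\lambda+a_3$ the matrix $H_\alpha$ is the $6\times6$ matrix whose odd rows are shifts of $(\sin\tfrac{3\alpha\pi}{2},\,a_1\sin\alpha\pi,\,a_2\sin\tfrac{\alpha\pi}{2},\,0)$ and whose even rows are shifts of $(\cos\tfrac{3\alpha\pi}{2},\,a_1\cos\alpha\pi,\,a_2\cos\tfrac{\alpha\pi}{2},\,a_3)$; by Theorem 1, system (3) is asymptotically stable if and only if its three even-order leading principal minors $\nabla_1$ (order $2$), $\nabla_2$ (order $4$) and $\nabla_3=\det H_\alpha$ (order $6$) are all positive. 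So the whole corollary reduces to evaluating these three determinants, simplifying the trigonometric coefficients, and dividing through by a manifestly positive factor so that only a polynomial inequality in $a_1,a_2,a_3$ and $s=\cos^2\tfrac{\alpha\pi}{2}$ survives. This is exactly the recipe already carried out for $n=2$ in Remark 1 and Corollary 1.

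For $\nabla_1$, the top-left $2\times2$ block gives $a_1\bigl(\sin\tfrac{3\alpha\pi}{2}\cos\alpha\pi-\cos\tfrac{3\alpha\pi}{2}\sin\alpha\pi\bigr)=a_1\sin\tfrac{\alpha\pi}{2}$, and since $\alpha\in[1,2)$ forces $\tfrac{\alpha\pi}{2}\in[\tfrac{\pi}{2},\pi)$ and hence $\sin\tfrac{\alpha\pi}{2}>0$, we get $\nabla_1>0$ if and only if $a_1>0$. This first, easy step already dictates the shape of the rest of the argument: each $\nabla_p$ should factor as a strictly positive trigonometric quantity times a polynomial in the $a_j$ and $s$.

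For $\nabla_2$ and $\nabla_3$ I would expand the $4\times4$ and $6\times6$ determinants by cofactors (clearing columns first to exploit the many zeros — note in particular that column $6$ meets only row $6$, which pulls the factor $a_3$ straight out of $\nabla_3$), then remove the compound angles via $\sin\alpha\pi=2\sigma c$, $\cos\alpha\pi=2c^2-1$, $\sin\tfrac{3\alpha\pi}{2}=\sigma(3-4\sigma^2)$, $\cos\tfrac{3\alpha\pi}{2}=c(4c^2-3)$ with $c=\cos\tfrac{\alpha\pi}{2}$, $\sigma=\sin\tfrac{\alpha\pi}{2}$, collect in powers of $\sigma$ and $c$, pull out the largest common power $\sigma^{k}$ (which is positive since $\sigma>0$ on $[1,2)$), and finally substitute $c^2=s$, $\sigma^2=1-s$ in the even-degree trigonometric monomials that are left. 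What remains is a polynomial in $a_1,a_2,a_3,s$, and because the extracted $\sigma^k$ is positive the sign of $\nabla_p$ equals that of the polynomial; matching against the displayed expressions completes Corollary 2.

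The real work, and the only place things can go wrong, is the bookkeeping in the trigonometric reduction of $\nabla_3$: the $6\times6$ determinant produces many cross terms, and one must check that after reduction no stray odd power of $c$ (negative on $(1,2)$) nor any sign-indefinite factor such as $\sin\tfrac{3\alpha\pi}{2}$ (which changes sign at $\alpha=\tfrac43$) is left outside the polynomial — only then is the passage from $\nabla_p>0$ to the stated polynomial inequality legitimate. A convenient consistency check to run alongside the computation is the limit $\alpha\to1$, i.e. $s=0$: there (3) is just $\dot x=Ax$, so the three conditions must collapse to the classical Routh–Hurwitz conditions for the cubic, namely $a_1>0$, $a_1a_2-a_3>0$ and $a_3>0$ (the last emerging from the factorization of $\nabla_3$, which at $s=0$ is $a_3(a_1a_2-a_3)^2$); any mismatch at $s=0$ pinpoints a sign slip in the determinant expansion.
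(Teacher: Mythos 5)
Your overall strategy is exactly what the paper (implicitly) does: the paper offers no derivation of Corollary 2 beyond the phrase ``based on Theorem 1,'' so specializing $H_\alpha$ to $n=3$, computing $\nabla_1,\nabla_2,\nabla_3$, and stripping off positive powers of $\sin\frac{\alpha\pi}{2}$ is the intended route, and your evaluation $\nabla_1=a_1\sin\frac{\alpha\pi}{2}$ is correct. The gap is that the entire content of the corollary is the two determinant evaluations you defer: for a purely computational statement, ``expand by cofactors and collect'' is a recipe, not a proof. Carrying out the $4\times4$ case does work cleanly --- one finds $\nabla_2=a_1^2a_2S_1^2-a_2^2S_2^2+a_1a_3S_1S_3=\sin^2\!\bigl(\tfrac{\alpha\pi}{2}\bigr)\bigl[(4a_1a_3-4a_2^2)s+a_1^2a_2-a_1a_3\bigr]$ with $S_k=\sin\frac{k\alpha\pi}{2}$, matching the printed second inequality --- so you should include at least this computation.

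More seriously, your own proposed sanity check at $s=0$, which you correctly predict should yield $\nabla_3=a_3(a_1a_2-a_3)^2$ (and indeed does: expanding the $6\times6$ matrix at $\alpha=1$ gives exactly that), is \emph{not} satisfied by the printed third inequality, whose constant term is $a_3(-a_1^2a_2^2+2a_1a_2a_3-a_3^2)=-a_3(a_1a_2-a_3)^2$. Concretely, for $f(\lambda)=(\lambda+1)^3$ (so $a_1=a_2=3$, $a_3=1$, asymptotically stable for every $\alpha\in[1,2)$) the printed third expression equals $-64$ at $s=0$ and $-8$ at $s=\tfrac12$, whereas direct evaluation gives $\nabla_3=8$ and $\nabla_3=2\sqrt2$ respectively. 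So the recipe you describe, executed correctly with the row ordering of Definition 1 (sine row above cosine row, which is forced by $\nabla_1=+a_1\sin\frac{\alpha\pi}{2}$), produces the \emph{negation} of the displayed third condition. You cannot ``match against the displayed expressions'' as you propose; you must either exhibit the sign-corrected polynomial or identify and justify a different convention in Lemma 1 that reverses the sign of the $6\times6$ minor without affecting the $2\times2$ and $4\times4$ ones. Until that discrepancy is resolved, the proof of the statement as printed is not complete.
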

\begin{corollary}
In the case of $n=4$, system(3) is asymptotically stable if and only if
\begin{equation}
\begin{aligned}
 &a_1>0, \quad \left ( 4a_1a_3-4a_2^2 \right )s+a_1^2a_2-a_1a_3>0,\\
  &\left ( 64a_1a_4^2-128a_2a_3a_4+64a_3^3 \right )s^3-(16a_1^2a_3a_4+16a_1a_2^2a_4+16a_1a_2a_3^2-64a_1a_4^2\\
 &+96a_2a_3a_4-48a_3^3 )s^2+(4a_1^3a_2a_4-4a_1^3a_3^2+8a_1^2a_3a_4+4a_1a_2^2a_4+4a_1a_2a_3^2-4a_2^3a_3\\
 &-16a_1a_4^2+16a_2a_3a_4-12a_3^3)s-a_1^3a_2a_4+a_1^2a_2^2a_3+a_1^2a_3a_42a_1a_2a_3^2+a_3^3>0,\\
 &a_4\cdot(4096a_4^3s^6+(-1024a_1a_3a_4^2-8192a_4^3)s^5+(256a_1^2a_2a_4^2+1536a_1a_3a_4^2-512a_2^2a_4^2
\\
    &+256a_2a_3^2a_4+6144a_4^3)s^4+(-64a_1^4a_4^2-64a_1^2a_2a_4^2-64a_1a_2^2a_3a_4-1024a_1a_3a_4^2
\\
    &+512a_2^2a_4^2-64a_2a_3^2a_4-64a_3^4-2048a_4^3)s^3+(48a_1^4a_4^2+16a_1^3a_2a_3a_4-64a_1^2a_2a_4^2
\\
    &-16a_1^2a_3^2a_4-32a_1a_2^2a_3a_4+16a_1a_2a_3^3+16a_2^4a_4+384a_1a_3a_4^2-128a_2^2a_4^2-64a_2a_3^2a_4\\
    &+48a_3^4+256a_4^3)s^2+(-12a_1^4a_4^2+4a_1^3a_2a_3a_4-4a_1^3a_3^3-4a_1^2a_2^3a_4+16a_1^2a_2a_4^2+8a_1^2a_3^2a_4\\
    &+16a_1a_2^2a_3a_4+4a_1a_2a_3^3-4a_2^3a_3^2-64a_1a_3a_4^2+16a_2a_3^2a_4-12a_3^4)s+a_1^4a_4^2-2a_1^3a_2a_3a_4\\
    &+a_1^2a_2^2a_3^2+2a_1^2a_3^2a_4-2a_1a_2a_3^3+a_3^4)>0.
\end{aligned}
\end{equation}
\end{corollary}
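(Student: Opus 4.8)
The statement is precisely the case $n=4$ of Theorem~1, so the plan is to specialize that theorem and then carry out the resulting determinant and trigonometric computations. Take $f(\lambda)=\lambda^{4}+a_{1}\lambda^{3}+a_{2}\lambda^{2}+a_{3}\lambda+a_{4}$. In Definition~1 the angles are $\tfrac{(4-j)\alpha\pi}{2}$ for $j=0,\dots,4$, i.e.\ $2\alpha\pi$, $\tfrac{3\alpha\pi}{2}$, $\alpha\pi$, $\tfrac{\alpha\pi}{2}$, $0$, so I would first write out the $8\times 8$ matrix $H_{\alpha}$ explicitly; since $\sin 0=0$ and $\cos 0=1$, its column $8$ has a single nonzero entry $a_{4}$ in position $(8,8)$, and expanding $\nabla_{4}=\det H_{\alpha}$ along that column gives $\nabla_{4}=a_{4}D$ with $D$ the $7\times 7$ leading principal minor. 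This already accounts for the overall factor $a_{4}$ in the fourth inequality, and more generally shows one only needs the $2\times2$, $4\times4$, $6\times6$ minors together with this one $7\times7$ minor.

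The device that keeps the computation under control is the identity $\sin\tfrac{m\alpha\pi}{2}=\sin\tfrac{\alpha\pi}{2}\cdot U_{m-1}(\cos\tfrac{\alpha\pi}{2})$ (Chebyshev polynomial of the second kind, with $U_{-1}=0$), which exhibits every ``sine'' row of $H_{\alpha}$ as $\sin\tfrac{\alpha\pi}{2}$ times a row that is polynomial in $\cos\tfrac{\alpha\pi}{2}$ and the $a_{j}$; the ``cosine'' rows already are, via $\cos\tfrac{m\alpha\pi}{2}=T_{m}(\cos\tfrac{\alpha\pi}{2})$. Pulling $\sin\tfrac{\alpha\pi}{2}$ out of each of the $p$ sine rows inside the $2p\times 2p$ leading block gives $\nabla_{p}=\sin^{p}\!\bigl(\tfrac{\alpha\pi}{2}\bigr)\,R_{p}$ with $R_{p}$ polynomial in $\cos\tfrac{\alpha\pi}{2}$ and $a_{1},\dots,a_{4}$; a parity count under $\cos\tfrac{\alpha\pi}{2}\mapsto-\cos\tfrac{\alpha\pi}{2}$ (which fixes $\sin\tfrac{\alpha\pi}{2}$ and multiplies the determinant by $(-1)^{p(2n-p-1)}=+1$) shows $R_{p}$ is even in $\cos\tfrac{\alpha\pi}{2}$, hence a polynomial $\widetilde{\nabla}_{p}$ in $s=\cos^{2}\tfrac{\alpha\pi}{2}$ and the $a_{j}$. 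Computing $\widetilde{\nabla}_{1},\dots,\widetilde{\nabla}_{4}$ explicitly — expanding the minors along the first column and using that each row-pair is the previous one shifted one place — is then a finite calculation that should return exactly the four polynomials in the statement, with $\widetilde{\nabla}_{4}$ equal to $a_{4}$ times the displayed sextic in $s$.

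For $\alpha\in[1,2)$ one has $\tfrac{\alpha\pi}{2}\in[\tfrac{\pi}{2},\pi)$, hence $\sin\tfrac{\alpha\pi}{2}>0$ and $s\in[0,1)$, so $\nabla_{p}>0\iff\widetilde{\nabla}_{p}>0$ and Theorem~1 yields the corollary. The hard part is simply the size of the symbolic work in the second step: the $6\times6$ minor $\nabla_{3}$ and the $7\times7$ minor $D$, plus the multiple-angle expansions and the regrouping into powers of $s$, are bulky enough to call for a computer algebra system, the only real discipline being to keep the Chebyshev factorization in sight so the passage to $s$ stays clean. I would cross-check the result three ways: the first two inequalities must coincide with those of Corollary~2 (they involve only $a_{1},a_{2},a_{3}$); putting $s=0$ must collapse all four to the classical Routh–Hurwitz conditions for a quartic (the last factor then being $a_{4}(a_{1}^{2}a_{4}-a_{1}a_{2}a_{3}+a_{3}^{2})^{2}$, and the third being $(a_{1}a_{2}-a_{3})(a_{1}a_{2}a_{3}-a_{3}^{2}-a_{1}^{2}a_{4})$); and random numerical tests of the equivalence for assorted $A$ and $\alpha\in[1,2)$.
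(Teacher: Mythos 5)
Your proposal is correct and matches the paper's (implicit) derivation: the paper simply asserts Corollary~3 as the $n=4$ specialization of Theorem~1 with $s=\cos^{2}\bigl(\tfrac{\alpha\pi}{2}\bigr)$, i.e.\ exactly your computation of the even-order leading principal minors of the $8\times 8$ matrix $H_{\alpha}$, with the positive factors $\sin^{p}\bigl(\tfrac{\alpha\pi}{2}\bigr)$ stripped off so that each condition becomes a polynomial inequality in $s$ and the $a_{j}$. Your Chebyshev/parity bookkeeping and the $s=0$ consistency checks (which do recover the classical quartic Routh--Hurwitz conditions, and also expose a typo in the paper's third inequality, where $a_{1}^{2}a_{3}a_{4}2a_{1}a_{2}a_{3}^{2}$ should read $a_{1}^{2}a_{3}a_{4}-2a_{1}a_{2}a_{3}^{2}$) go beyond what the paper records, but the route is the same.
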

The fractional-order Routh-Hurwitz criterion can be used to analyze the robust stability of fractional-order uncertain systems.
\par
Consider fractional-order uncertain system as follows:
\begin{equation}
\textrm{D}^{\alpha }x=A(\beta)x,
\end{equation}
where $(\alpha,\beta)=(\alpha,\beta_1,\beta_2,\cdots,\beta_i)$ are uncertain parameters, $x=(x_1,x_2,\cdots,x_n)^T$ is the state vector.
\par
The characteristic polynomial of matrix $A(\beta)$ is
\begin{equation}
f(\lambda;\alpha,\beta)=\lambda^n+a_1(\alpha,\beta)\lambda^{n-1}+\cdots+a_n(\alpha,\beta).
\end{equation}
Since all expressions in our method are explicit ones about the coefficients of the characteristic polynomial of system matrix, so Theorem 1 is also effective for analyzing the robust stability of system(23).
\par
System(23) is of certain parameters for given parameter $(\alpha,\beta)$. We call the parameter $(\alpha,\beta)$ a stable parameter if the corresponding system is asymptotically stable. The set of all stable parameters is called the stable parameter region, denoted by $\Sigma(\alpha,\beta)$. According to Theorem 1, the stable parameter region of system(23) is the set of the solutions of $\left \{ \nabla_p>0,p=1,2,\cdots,n \right \}$. All expressions in our results are explicit ones about the coefficients of the characteristic polynomial of system matrix, so the stable parameter region can be described directly.
\section{Illustrative Examples}
\begin{example}
\cite{yang2019cad}Consider the following fractional-order uncertain system:
\begin{equation}
\frac{\mathrm{d}^{1.5 }x(t) }{\mathrm{d} t^{1.5 }}=(A_0+\beta_1A_1+\beta_2A_2)x(t),
\end{equation}
where
\begin{center}
$A_0=\begin{bmatrix}
-1 & 3\\
0 & -1
\end{bmatrix},
A_1=\begin{bmatrix}
-1 & 0\\
1 & -1
\end{bmatrix},
A_2=\begin{bmatrix}
-1 & 1\\
0 & 0
\end{bmatrix}.$
\end{center}
\begin{figure}[h]
\begin{center}
\includegraphics[width=3in]{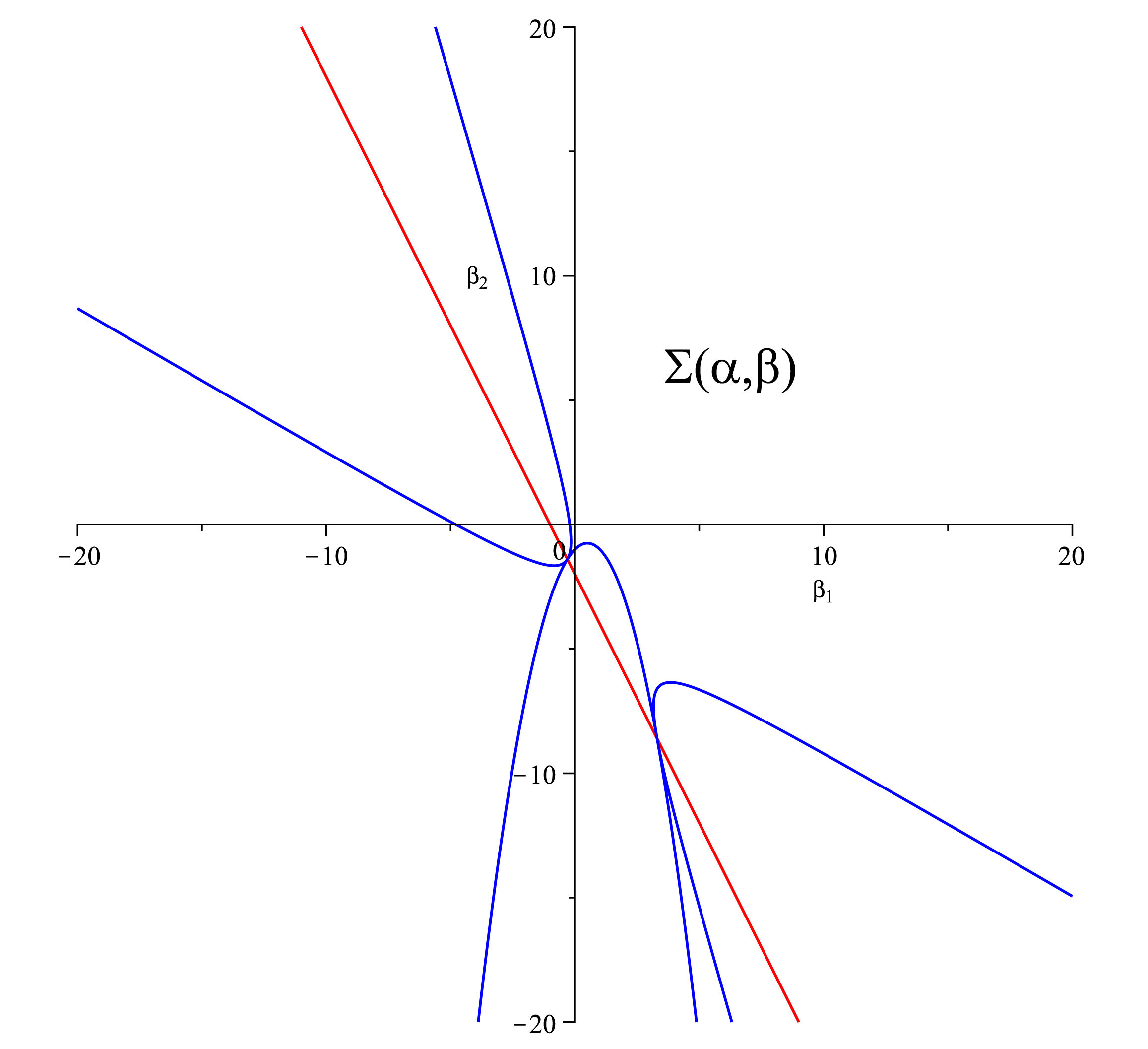}
\end{center}
\caption{The solutions of the set of inequalities(27)}
\label{fig3}
\end{figure}
The characteristic polynomial of $A_0+\beta_1A_1+\beta_2A_2$ is
\begin{equation}
f(\lambda)=\lambda^2+(2\beta_1+2+\beta_2)\lambda+\beta_1^2+\beta_2-\beta_1+1,
\end{equation}
Based on Corollary 1, we know that system(25) is asymptotically stable if and only if
\begin{equation}
\begin{aligned}
 &2\beta_1+2+\beta_2>0,\\
 &(\beta_1^2+\beta_2-\beta_1+1)\left ( (2\beta_1+2+\beta_2)^2-2(\beta_1^2+\beta_2-\beta_1+1)\right )>0.
\end{aligned}
\end{equation}
Parameters that satisfy the set of inequalities(27) are stable parameters of system(25). The solutions of the set of inequalities(27) are shown in Figure 3, in which the stable parameter region is marked.
\begin{remark}
System(25) has been considered\cite{lu2013robust,yang2019cad}. By using existing methods, the robustness bound of a single parameter can be obtained, or the stable parameter region can be determined by taking points to test the stability of some corresponding systems. All expressions in our results are explicit ones about the coefficients of the characteristic polynomial of system matrix, the relationship among multiple parameters can be described and the stable parameter region can be solved directly.
\end{remark}
\end{example}
\begin{example}
Consider a fractional-order system with $n=3$, where $\alpha\in\left [1,2  \right )$ and $\beta\in(0,10)$ are uncertain parameters. Suppose the characteristic equation of the system matrix is:
\begin{equation}
\lambda^3+(\beta-\alpha)\lambda^2+2\beta\lambda+4=0.
\end{equation}
\begin{figure}[h]
\begin{center}
\includegraphics[width=2.6in]{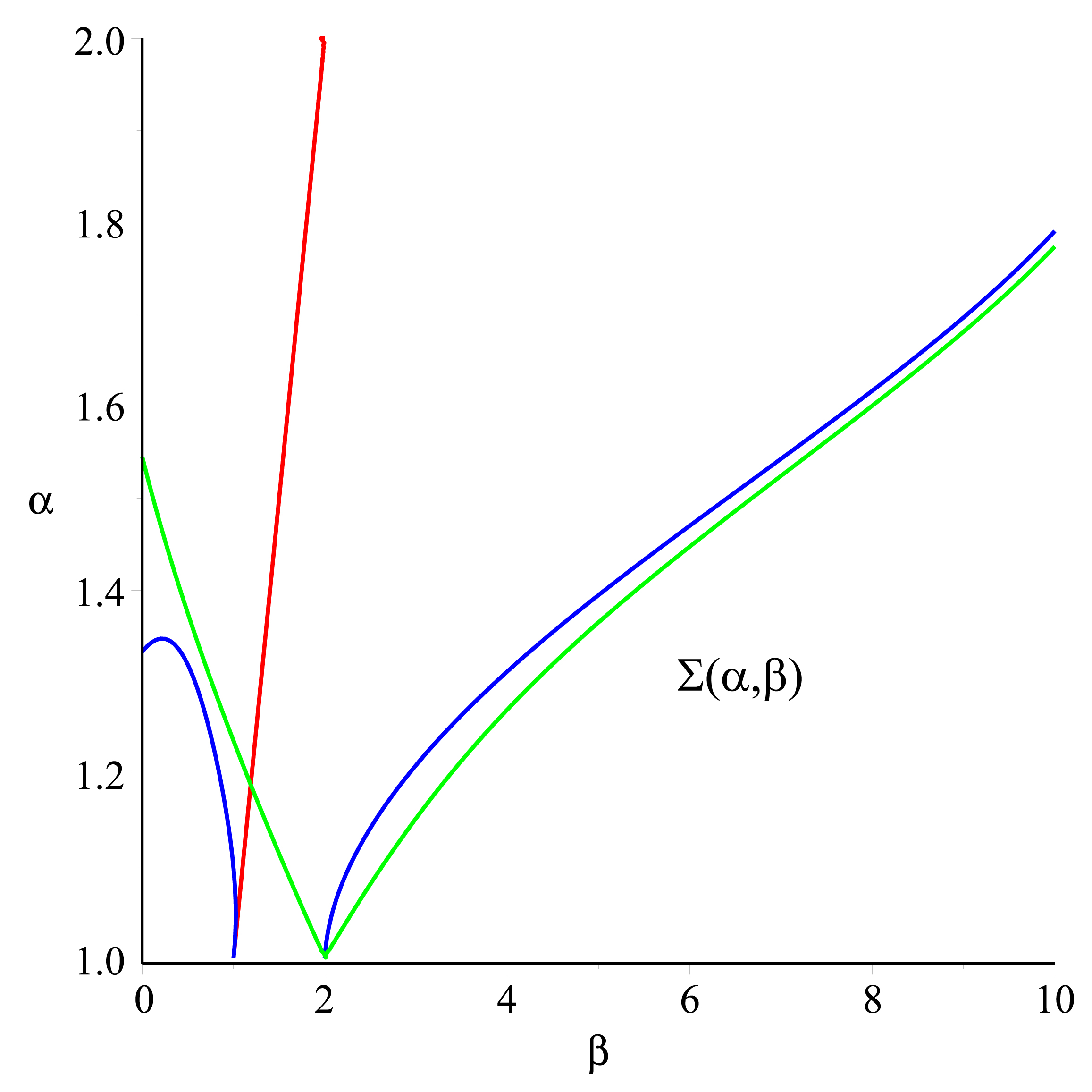}
\end{center}
\caption{The solutions of the set of inequalities(29)}
\label{fig4}
\end{figure}
\par Based on Corollary 2, we know that the system is asymptotically stable if and only if
\begin{equation}
\begin{aligned}
 &\beta-\alpha>0,\\
 &\left (16(\beta-\alpha)-16\beta^2  \right )s+2(\beta-\alpha)^2\beta-4\beta+4\alpha>0,\\
 &-4(1024s^3-\left (128(\beta-\alpha)\beta+768  \right )s^2+(16(\beta-\alpha)^3-32(\beta-\alpha)\beta\\
 &+32\beta^3+192)s-4(\beta-\alpha)^2\beta^2+16(\beta-\alpha)\beta-16)>0.
\end{aligned}
\end{equation}
The solutions of the set of inequalities(29) are shown in Figure 4, in which the stable parameter region is marked.
\end{example}
\begin{remark}
Consider fractional-order systems with uncertain order, the existing methods can not describe the relationship between order parameter and stability directly\cite{yang2019robust,2011Robust,2012Robust}. By using our method, systems with uncertain order and uncertain other parameters can be analyzed easily and all results are explicit expressions about the coefficients of the characteristic polynomial of system matrix.
\end{remark}
\begin{example}
\cite{lu2013stability}Consider the following fractional-order uncertain system with $n=4$:
\begin{equation}
\frac{\mathrm{d}^{1.5 }x(t) }{\mathrm{d} t^{1.5 }}=(\beta_1A_1+\beta_2A_2)x(t)
\end{equation}
where $\beta_1+\beta_2=1,\beta_{1},\beta_{2}\geq 0$, $A_1=\Gamma-\varepsilon b*c$, $A_2=\Gamma+\varepsilon b*c$,
\begin{center}
$\Gamma =\begin{bmatrix}
-1 & 1 & 0 & 0\\
0 & -2 & 1 & 0\\
0 & 0 & -3 & 1\\
-10 & -10 & -20 & -6
\end{bmatrix},
b =\begin{bmatrix}
0\\
0 \\
0\\
1
\end{bmatrix},
c'=\begin{bmatrix}
3\\
3\\
0\\
0
\end{bmatrix}.$
\end{center}
\begin{figure}[h]
\begin{center}
\includegraphics[width=2.6in]{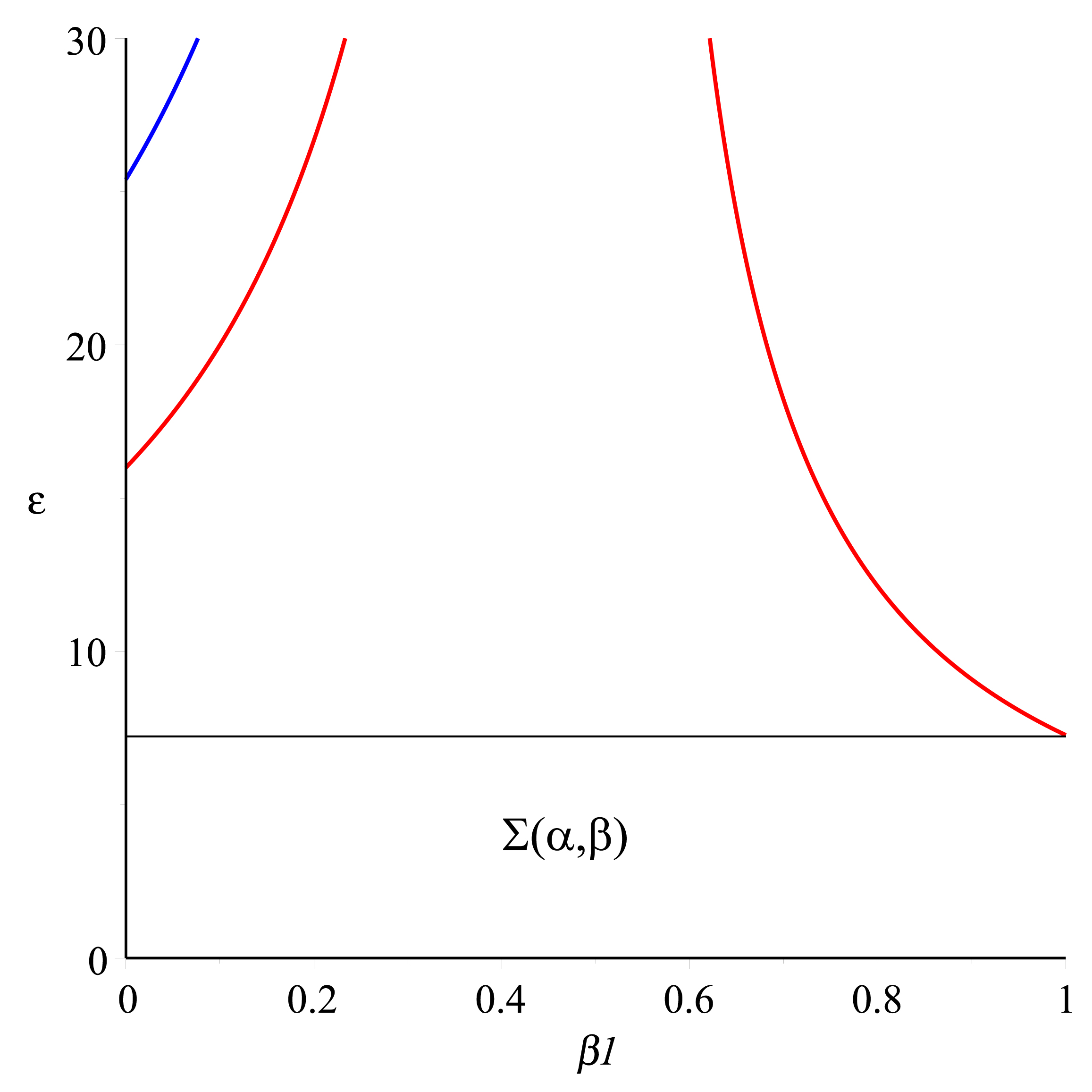}
\end{center}
\caption{The solutions of the set of inequalities}
\label{fig3}
\end{figure}
Let $\beta_2=1-\beta_1$, the characteristic polynomial of system matrix is:
\begin{equation}
f(\lambda)=\lambda^4+12\lambda^3+67\lambda^2+\left (6\beta_1\varepsilon-3\varepsilon+142\right )\lambda+12\beta_1\varepsilon-6\varepsilon+96,
\end{equation}
based on Corollary 3, the solutions of the set of inequalities are shown in Figure 5, the stable parameter region of system(30) is marked. For system(30), since $\beta_1$ satisfies $0\leq\beta_1<1$, which means the set of inequalities has solutions for all $\beta_1\in \left [0,1 \right )$, so the maximum value of $\varepsilon$ is 7.274.
\begin{remark}
The above examples showed the effectiveness of our method for analyzing fractional-order systems with $n=2,3,4$. For general n, based on the existing method\cite{tavazoei2009note}, we need to test $\Delta_{i}>0,i=1,2,\cdots,2n$ to determine the stability of system(23). And as a comparison in this case, using our method, we only need to test whether $\nabla_{p}>0,p=1,2,\cdots,n$ to analyze the stability. The number of leading principle minors of our method is smaller than existing methods.
\end{remark}
\end{example}

\section{Conclusions}
Based on the generalized Routh-Hurwitz criterion, we give a fractional-order Routh-Hurwitz criterion for analyzing the stability and robust stability of fractional-order linear systems with $1\leq\alpha<2$. Compared with existing methods, our one involves fewer and simpler expressions, so it has low computational complexity. All expressions in our results are explicit ones about the coefficients of the characteristic polynomial of system matrix, so the stable parameter region of fractional-order uncertain systems can be described directly. Our method is suitable for some complex cases just like systems with uncertain order and uncertain other parameters.
\section*{Acknowledgment}
This work was partially supported by the National Natural Science Foundation of China
under Grants No.12171073.
\section*{References}
\bibliographystyle{elsarticle-num-names}
\biboptions{square,numbers,sort&compress}
\bibliography{elsarticle-num-names}

\end{document}